\def\seq#1_#2{\langle #1_#2:#2\in\omega\rangle}
\def\fc#1|#2{#1\uparrow#2}
\def\ain{\subseteq^*}
\def\C{{\mathfrak c}}
\def\set#1:#2.{{\{\,#1:#2\,\}}}
\def\so{\mathop{\rm so}}
\def\Fn{\mathop{\rm Fn}}
\def\supp{\mathop{\rm supp}}
\let\namef\dot
\def\Hc{{H(2^{\C^+})}}
\def\force#1``#2''{\Vdash#1\hbox{``}#2\hbox{''}}
\def\K{{\cal K}}
\def\Ka{{\cal K}_\alpha}
\def\Kb{{\cal K}_\beta}
\def\Ua{{\cal U}_\alpha}
\def\Ub{{\cal U}_\beta}
\def\MMd{{\hbox{$\psi^*$}-derived}}
\def\G{{\mathbb G}}
\title{Convergence in topological groups and the Cohen reals}
\author{Alexander Y.~Shibakov\footnote{Tennessee Tech.\ University, email: {\tt ashibakov@tntech.edu}}}
\newtheorem{theorem}{Theorem}
\newtheorem{lemma}{Lemma}
\newtheorem{example}{Example}
\newtheorem{definition}{Definition}
\newtheorem{question}{Question}
\begin{document}
\maketitle
\begin{abstract}
We show that after adding $\omega_2$ Cohen reals to a model of
$\diamondsuit$ the resulting extension has no countable sequential
groups of intermediate sequential order. On the other hand there
exists an uncountable sequential group of sequential order $2$ in the
same extension.
\end{abstract}
\setlist[enumerate,1]{label={\rm(\arabic*)},ref={(\arabic*)}}
\setlist[enumerate,2]{label={\rm(\alph*)},ref={(\arabic{enumi}.\alph*)}}
\section{Introduction}
The study of convergence properties in the presence of a group
structure has shown that the restrictions placed by the algebra
on the topology of the group depend greatly on the set
theoretic assumptions. In their celebrated paper~\cite{HRG} M.~Hru\v
s\'ak and Ramos-Garc\'ia solve a long standing problem due to Malykhin
by constructing a model of ZFC in which every countable Fr\'echet
group is metrizable. In~\cite{S1} it is shown that there exists a
model of ZFC in which every sequential group (whether countable or
uncountable) of intermediate sequential order is Fr\'echet (see below
for the exact definitions), answering a question of P.~Nyikos. On the
other hand, CH implies the existence of many pathological sequential
groups (see~\cite{S2}) including nonmetrizable countable Fr\'echet
groups.

The authors of \cite{HRG} ask how strong the interaction between
algebra and convergence really is, in particular, whether the
existence of a nonmetrizable Fr\'echet group topology on one countable
group implies that every abelian group has such a topology. This question is still
open.

In this paper we show that after adding $\omega_2$
Cohen reals to a model of $\diamondsuit$ the resulting extension has
no countable sequential groups of intermediate sequential order, while
there is an uncountable such group (see Theorem~\ref{sggap}). This
shows that both the existence and the size of the group of an
intermediate sequential order depend on the set theoretic
assumptions. This is in contrast to the situation with the F\'echet
groups where an uncountable nonmetrizable Fr\'echet group exists in ZFC.
\section{Definitions and terminology}
Our general set theoretic terminology is standard and
follows~\cite{Ku}. $\Fn(I,2)$ will stand for the set of functions
$p:J\to2$ where $I\supseteq J=\supp p$ is finite, with the natural order.
All spaces are assumed to be Hausdorff unless stated otherwise.

If $X$ is a topological space and $C\subseteq X$, $x\in
X$, we write $C\to x$ to indicate that $C$ converges to $x$, i.e.\
$C\ain U$ for every open $U\ni x$.
Recall that a space $X$ is called {\it sequential\/} if for every $A\subseteq X$
such that $\overline{A}\neq A$ there is a $C\subseteq A$ such that
$C\to x\not\in A$.

Let $A\subseteq X$. Define the {\it sequential closure of $A$}, $[A]'
= \{\,x\in X: C\to x\text{ for some }C\subseteq A\,\}$.
Put $[A]_\alpha = \cup\{[A_\beta]':\beta<\alpha\}$ for $\alpha\leq\omega_1$.
Define $\so(X) = \min\{\,\alpha \leq \omega_1 : [A]_\alpha =
\overline{A}\text{ for every }A\subseteq X\,\}$. $\so(X)$ is called
the {\it sequential order\/} of $X$. Spaces of sequential order $\leq
1$ are called Fr\'echet.

Let $X$ be an arbitrary set and ${\cal
K}\subseteq2^X$, so that each $K\in{\cal K}$ is equipped with a
topology $\tau_K$.
Then one can introduce a topology on $X$ by making a
$U\subseteq X$ open in $\tau$ if and only if each $U\cap K$
is open in $\tau_K$. We will say that $\tau$ is {\it determined\/} by
${\cal K}$. While as a subspace of $X$ in this topology, the
topology of $K\in{\cal K}$ may be different from $\tau_K$,
if each $(K,\tau_K)$ is compact and Hausdorff and for any
$K,F\in{\cal K}$ the intersection $K\cap F$ is closed in each $K$ and
$F$ and inherits the same topology from either space then the topology of each
$K\in{\cal K}$ as a subspace of $X$ is exactly $\tau_K$. Note that
$\tau$ is $T_1$ if and only if each $\tau_K$ is.

If the family ${\cal K}$ above is countable, consists of compact
spaces and satisfies the condition mentioned at the end of the
previous paragraph, $X$ will be called a {\it $k_\omega$ space}.

We use~\cite{AT} as a general reference on topological groups. In
abelian groups $0$, $+$, and $-$ stand for the unit, the group
operation and the algebraic inverse, respectively.
It will be convenient to use $\sum^kB$ for the
sum $B+\cdots+B$ of $k$ copies of some subset $B$ of an abelian group
$G$ where $\sum^0B=\{0\}$. A collection ${\cal U}$ of subsets of $G$ is called
{\it translation invariant\/} if for any $g\in G$ and any $U\in{\cal
U}$ $g+U\in{\cal U}$.

The smallest cardinality of a family ${\cal U}$
of open neighborhoods of $x$ such that $\cap{\cal U}=\{x\}$ is called
the {\it psudocharacter $\psi(x,X)$ of $X$ at $x\in X$\/}. When $G$
is a topological group we write simply $\psi(G)$.
An uncountable group $G$ will be called {\it co-countable\/} if every
quotient of $G$ that has countable pseudocharacter is countable.

Recall that a (necessarily abelian) group $G$ is called {\it boolean\/} if $a+a=0$
for every $a\in G$.

The following lemma is likely folklore, see~\cite{S3} for a proof.
\begin{lemma}\label{komega}
Let $G$ be a group, and $\K$ be a cover of $G$.
Suppose each $K\in\K$ is given a compact Hausdorff topology such that for
any $K',K''\in\K$ the set $K'\cap K''$ is closed in both $K'$ and
$K'$ and the induced topologies are the same. Suppose further that
the sums, inverses, and unions of any finite number of $K$'s are contained in
some $K'$'s and that
the addition and algebraic inverse maps restricted to
the corresponding (finite products of) compacts are continuous (for any large
enough compact range). Then the
topology $\tau$ determined by $\K$ on $G$ is translation invariant. If
$\K$ is countable $\tau$ is a Hausdorff group topology.
\end{lemma}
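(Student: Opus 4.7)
The plan is to first establish translation invariance of $\tau$ for arbitrary $\K$, then, assuming $\K$ is countable, verify continuity of the two group operations, and finally upgrade $T_1$ to Hausdorffness.

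For translation invariance, fix $g\in G$ and $U\in\tau$, and check that $(g+U)\cap K$ is open in $\tau_K$ for each $K\in\K$. Choose $K_g\in\K$ with $g\in K_g$ and, by the closure of $\K$ under finite sums and inverses, pick $K'\in\K$ with $-K_g+K\subseteq K'$. The map $\phi_g\colon K\to K'$, $k\mapsto -g+k$, is continuous ($-g$ lies in some $K_g^-\in\K$ containing $-K_g$, and $+\colon K_g^-\times K\to K'$ is continuous by hypothesis), and being a continuous injection from a compact space into a Hausdorff one it is a topological embedding. Then $(g+U)\cap K=\phi_g^{-1}(U\cap K')$, which is open in $K$ since $U\cap K'\in\tau_{K'}$ by definition of $\tau$.

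Assume now $\K$ is countable. Continuity of $-\colon G\to G$ follows by the analogous argument applied to $-\colon K\to K''$ for any $K''\in\K$ with $K''\supseteq -K$. For continuity of $+\colon G\times G\to G$ I would invoke the standard fact that for a countable $k_\omega$ space $G$ the product topology on $G\times G$ coincides with the topology determined by the countable family $\{K_i\times K_j:K_i,K_j\in\K\}$; granting this, continuity of $+$ reduces to continuity on each $K_i\times K_j$ into some $K_l\in\K$ with $K_l\supseteq K_i+K_j$, which is precisely the addition hypothesis. Finally, $\tau$ is $T_1$ because each $\tau_K$ is (as recalled right before the lemma), and $T_1$ together with continuous $+$ and $-$ gives Hausdorffness (the diagonal of $G\times G$ is the preimage of the closed point $\{0\}$ under the continuous map $(x,y)\mapsto x-y$).

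The main obstacle is the identification of the product topology on $G\times G$ with the topology determined by $\{K_i\times K_j\}$. Countability of $\K$ is essential here; it enters together with the compact Hausdorff consistency condition already recalled in the excerpt, which ensures that the subspace topology on each $K\in\K$ inherited from $G$ equals $\tau_K$, so that in turn the subspace topology on $K_i\times K_j$ from $G\times G$ equals $\tau_{K_i}\times\tau_{K_j}$. Without countability one cannot in general conclude that a set open in each $K_i\times K_j$ is open in the product topology, and the continuity of $+$ would have to be argued by some other device.
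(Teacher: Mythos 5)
The paper does not actually prove Lemma~\ref{komega}; it declares it ``likely folklore'' and defers to~\cite{S3}, so there is no in-paper argument to compare against. Judged on its own, your plan is correct and is the standard folklore argument. The translation-invariance step is right: $(g+U)\cap K=\phi_g^{-1}(U\cap K')$ with $\phi_g$ continuous by the hypothesis on addition restricted to compacta (note you do not need the embedding observation --- continuity of $\phi_g$ alone makes the preimage open), and this part indeed uses no countability. You also correctly isolate the one substantive ingredient for the countable case, namely that the product of two countable $k_\omega$ decompositions determines the product topology on $G\times G$; this is a genuine theorem (it needs the tube-lemma/compactness induction, and it is exactly where countability and the compatibility of the $\tau_K$'s with the subspace topologies enter), but it is standard and citing it is legitimate here. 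The $T_1$-plus-group-operations-implies-Hausdorff finish is fine. Two small caveats: the lemma as stated speaks of a ``group'' rather than an abelian group, so strictly one should note that the same argument handles right translations (in the paper's applications $G$ is boolean, so this is moot); and when you reduce continuity of $+$ to continuity on each $K_i\times K_j$ into some $K_l$, you should add the one-line remark that the inclusion $K_l\hookrightarrow G$ is continuous because the subspace topology on $K_l$ induced by $\tau$ is $\tau_{K_l}$, which you do address in your closing paragraph.
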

The definition below can be viewed as an approximation of a group
topology on $G$.
\begin{definition}
Let $G$ be an abelian topological group, ${\cal U}$ be a family of open subsets
of $G$. Call ${\cal U}$ {\it centered\/} if it is translation
invariant, closed under finite intersections, $H=\cap\set U:0\in
U\in{\cal U}.$ is a closed subgroup of $G$,
$U+H=U$ for every $U\in{\cal U}$, and $\set p(U):U\in{\cal U}.$ is a
base of some Hausdorff topology on $G/H$ where $p:G\to G/H$ is the
natural quotient map.
\end{definition}

The next simple lemma provides a basic method of constructing centered families.
\begin{lemma}\label{opensat}
Let $G$ be a co-countable abelian group. If ${\cal U}$ is a countable centered
family of open subsets of $G$ and $g\in U\subseteq G$ is open then ${\cal U}$ can be extended to a
countable centered family ${\cal U}'$ of open subsets of
$G$ such that for some $U'\in{\cal U}'$ $g\in U'\subseteq U$.
\end{lemma}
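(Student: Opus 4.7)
My plan is to adjoin to ${\cal U}$ the translates of a Birkhoff--Kakutani-style descending chain of symmetric open neighborhoods of $0$, each appropriately saturated, and verify that the resulting family is centered. By translation invariance of ${\cal U}$ we may assume $g=0$. Enumerate $\set W_n:n\in\omega.$, the neighborhoods of $0$ in ${\cal U}$, as a descending sequence (possible since ${\cal U}$ is closed under finite intersections). Fix a symmetric open $V_{-1}\subseteq U$ containing $0$ and recursively pick symmetric open $V_n$ with $V_n+V_n\subseteq V_{n-1}\cap W_n$ and $\cl{V_n}\subseteq V_{n-1}$ (using regularity of $G$). A standard calculation yields $H^*:=\bigcap_n V_n=\bigcap_n\cl{V_n}$ is a closed subgroup of $G$. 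Replace each $V_n$ by its $H^*$-saturation $V_n^*:=V_n+H^*$; this set is open, symmetric, satisfies $V_n^*+V_n^*\subseteq V_{n-1}^*$ and $\bigcap_n V_n^*=H^*$, with in particular $V_0^*\subseteq V_{-1}\subseteq U$.

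The countable family $\set p^*(V_n^*):n\in\omega.$, where $p^*\colon G\to G/H^*$ is the quotient map, is a base at $0$ of a Hausdorff topology on $G/H^*$ whose pseudocharacter at $0$ is therefore countable; as the quotient topology on $G/H^*$ is finer, it too has countable pseudocharacter at $0$, and co-countability of $G$ forces $|G/H^*|\le\omega$. Consequently every $H^*$-saturated open subset of $G$ has a countable translation orbit, and so the family ${\cal U}'$ obtained from ${\cal U}\cup\set V_n^*+a:n\in\omega,\,a\in G.$ by closing under finite intersections is still countable.

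It remains to verify that ${\cal U}'$ is centered with $U'=V_0^*$ as the required element. Translation invariance and closure under finite intersections are automatic by construction. For $H':=\bigcap\set W:0\in W\in{\cal U}'.$, the inclusion $H^*\subseteq H'$ follows from $h+V_n^*=V_n^*$ for every $h\in H^*$, while $H'\subseteq\bigcap_n V_n^*=H^*$ gives the reverse. Each generator of ${\cal U}'$ is $H^*$-saturated (members of ${\cal U}$ are $H$-saturated, and $H^*\subseteq H$), and this property is preserved under finite intersection, so $W+H^*=W$ for every $W\in{\cal U}'$. Finally, $\set p^*(W):W\in{\cal U}'.$ is translation invariant and closed under finite intersections in $G/H^*$, with intersection $\{0\}$ above $0$, so it is the base of a Hausdorff topology on $G/H^*$. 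The main technical obstacle is the joint choice of $H^*$ as a closed subgroup of $G$ that is contained in $U$ (so an $H^*$-saturated open subset of $U$ can sit around $0$) and has countable index in $G$: the Birkhoff--Kakutani construction delivers the first, and co-countability of $G$ the second.
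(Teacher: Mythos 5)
Your argument is correct and takes essentially the same route as the paper's own proof: after translating to $g=0$, both build a countable descending chain of neighborhoods of $0$ inside $U$ with the subtraction property, take its intersection $H^*$ as a closed subgroup contained in $H$, saturate the family by $H^*$, and use co-countability to conclude $|G/H^*|\le\omega$ so that adjoining all translates keeps the family countable. The one step to tighten is your closing inference: translation invariance, closure under finite intersections, and $\bigcap\{\,W\in{\cal U}':0\in W\,\}=H^*$ only yield a $T_1$ base on $G/H^*$ (the cofinite topology on a countable group meets all three conditions), so Hausdorffness should instead be derived from the chain you already built, namely $V_n^*-V_n^*\subseteq V_{n-1}^*$ together with $\bigcap_n V_n^*=H^*$, which separates any two distinct cosets.
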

\begin{proof}
Let $H=\cap\set V:0\in V\in{\cal U}.$. By shifting $U$ if necessary,
assume $g=0$. Extend the family ${\cal
U}\cup\{U\}$ if necessary to a family ${\cal U}'''$ of open subsets of
$G$ to ensure that the new family is countable,
closed under finite intersections, and for any $0\in V\in{\cal U}'''$ there
are $O,W\in{\cal U}'''$ such that $0\in O\cap W$ and $O-W\subseteq
V$. Put $H'=\cap\set V:0\in V\in{\cal U}'''.$ and ${\cal U}''=\set
V+H':V\in{\cal U}'''.$. Then $H'\subseteq H$ is a
closed subgoup of $G$ so ${\cal U}\subseteq{\cal U}''$. The group
$G/H'$ is countable so ${\cal U}''$ can be extended to a centered
family ${\cal U}'$ by extending the family $\set p(V):V\in{\cal U}''.$
to a translation invariant base of open subsets of a Hausdorff
topology on $G/H'$ closed under finite intersections where $p:G\to
G/H'$ is the natural quotient map. To see that $U'\subseteq U$ for
some $U'\in{\cal U}''$ let $V,W\in{\cal
U}'''$ be such that $0\in V\cap W$ and $V-W\subseteq U$. Then
$U'=V+H'\subseteq U$.
\end{proof}

Recall that $A,B\subseteq\omega$ are said to be {\it almost
disjoint\/} if $|A\cap B|<\omega$. An infinite family ${\cal
A}\subseteq[\omega]^\omega$ is called a {\it maximal almost
disjoint\/} or {\it MAD\/} family if any two different sets in ${\cal
A}$ are almost disjoint and every infinite subset of $\omega$ that is
almost disjoint with every element of ${\cal A}$ is in ${\cal A}$.

The following result is a corollary of \cite{Ku}, Theorem~VIII.2.3.

\begin{lemma}[\cite{Ku}]\label{kunen}
If $V\vdash\hbox{CH}$ there exists a MAD family ${\cal A}\in V$ such
that ${\cal A}$ remains a MAD family in $V[\G]$ where $\G$ is
$\Fn(\omega_2,2)$-generic.
\end{lemma}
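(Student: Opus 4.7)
The plan is to construct the MAD family $\mathcal A\in V$ by a transfinite recursion of length $\omega_1$ that anticipates every potential destroyer of maximality in $V[\G]$. The first step is to reduce to a countable subforcing: since $\Fn(\omega_2,2)$ has the ccc, every nice name for a subset of $\omega$ uses only countably many coordinates in its $\supp$, and by the homogeneity of Cohen forcing under coordinate permutations it suffices to treat nice $\Fn(\omega,2)$-names. Under CH there are only $\omega_1$ such names, so I enumerate the pairs $\{(p_\alpha,\dot X_\alpha):\alpha<\omega_1\}$ with $p_\alpha\in\Fn(\omega,2)$ and $\dot X_\alpha$ a nice $\Fn(\omega,2)$-name for an infinite subset of $\omega$.

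The recursion produces $A_\alpha\in[\omega]^\omega\cap V$, pairwise almost disjoint, defeating the pair $(p_\alpha,\dot X_\alpha)$ at stage $\alpha$ whenever it is a viable threat---that is, when $p_\alpha\Vdash\dot X_\alpha\in[\omega]^\omega$ and $p_\alpha\Vdash|A_\beta\cap\dot X_\alpha|<\omega$ for all $\beta<\alpha$. The heart of the argument is the following technical claim: given a countable almost disjoint family $\{A_i:i<\omega\}\subseteq V$ and $p\in\Fn(\omega,2)$ with $p\Vdash\dot X\in[\omega]^\omega$ and $p\Vdash|A_i\cap\dot X|<\omega$ for every $i$, there is $A\in[\omega]^\omega\cap V$ almost disjoint from each $A_i$ with $p\Vdash|A\cap\dot X|=\omega$. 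To prove this I enumerate the conditions below $p$ as $(q_i)_{i<\omega}$ and inductively pick $n_i\notin A_0\cup\cdots\cup A_i\cup\{n_0,\dots,n_{i-1}\}$ together with $r_i\leq q_i$ forcing $n_i\in\dot X$; the existence of $n_i$ follows since otherwise $q_i$ would force $\dot X$, modulo a finite set, into $A_0\cup\cdots\cup A_i$, contradicting the hypotheses. Setting $A=\{n_i:i<\omega\}$, a density argument---descend through $r_{i_1}\geq r_{i_2}\geq\cdots$ with $q_{i_{k+1}}\leq r_{i_k}$ below any $q\leq p$---gives $p\Vdash|A\cap\dot X|=\omega$.

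To conclude that $\mathcal A$ is MAD in $V[\G]$, given an infinite $X\in V[\G]$ with nice $\Fn(\omega,2)$-name $\dot X$ and $p\in\G$ forcing $\dot X$ infinite, at some stage $\alpha_0$ the pair $(p,\dot X)$ is considered, and the construction produces some $A_\gamma$ (with $\gamma\leq\alpha_0$) such that $p\Vdash|A_\gamma\cap\dot X|=\omega$, whence $|A_\gamma\cap X|=\omega$ in $V[\G]$. The main obstacle I anticipate is this final verification: one must argue that for every supposed destroyer in $V[\G]$ some stage of the recursion has indeed committed to an $A_\gamma$ defeating it, which uses the homogeneity of Cohen forcing to reduce arbitrary countable supports in $\omega_2$ to $\omega$ and a careful analysis to match generic filters with the bookkeeping of pairs.
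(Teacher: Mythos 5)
Your proposal is correct and is essentially the standard argument of Kunen's Theorem~VIII.2.3, which is exactly what the paper cites for this lemma (the paper gives no proof of its own): reduce to $\Fn(\omega,2)$ via ccc nice names and homogeneity, enumerate the $\omega_1$ pairs $(p,\dot X)$ under CH, and defeat each viable threat with the diagonalization you describe. The technical claim and its density verification are sound, so no further comment is needed.
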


As shown in~\cite{H}, such a family also exists in any model
satisfying ${\mathfrak b}=\C$. The MAD family of Lemma~\ref{kunen} is used
in the example below to construct a compact sequential space that
retains its convergence properties after adding Cohen reals
(see~Lemma~\ref{pstable} below).

\begin{example}\label{psi}Let ${\cal A}\subseteq[\omega]^\omega$ be a
maximal almost disjoint family from Lemma~\ref{kunen}. Put
$\psi^*=\{\omega\}\cup{\cal A}\cup\omega$ and define a topology on
$\psi^*$ by making each $n\in\omega$ isolated, making $\set A\setminus
n \cup\{A\}:n\in\omega.$ the base of open 
neighborhoods at $A\in{\cal A}$, and making
$\set\psi^*\setminus((\cup{\cal A}')\cup{\cal A}'):{\cal
A}'\subseteq{\cal A}\hbox{ is finite}.$ the base of open neighborhoods 
at $\omega$. Then $\psi^*$ is a compact sequential space such that
$\so((\psi^*)^n)=2$ for every $n\in\omega$ (see~\cite{Ka}).
\end{example}

Let $\psi^*$ be the compact space from Example~\ref{psi}. Call a compact
$K$ {\em\MMd\/} if $K$ is a continuous image of a closed subspace of
$(\psi^*)^n\times C^m$ for some $m,n\in\omega$ where $C$ is a
convergent sequence. Note that $\so(K)\leq 2$ for each \MMd\ compact
$K$ since perfect maps do not raise the sequential order (see~\cite{Ka}).

\begin{lemma}\label{pstable}
Let $V'$ be an extension of $V$ in which the MAD family from
Example~\ref{psi} remains a MAD family. Let $K\in V$ be an \MMd\ compact
space with the topology whose base is the topology of $K$ in
$V$. Then $K$ is a sequential compact space in $V'$ such that $\so(K)\leq2$.
\end{lemma}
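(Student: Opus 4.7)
The plan is to lift the conclusion through the chain of constructions defining a $\psi^*$-derived compact: first establish the claim for $\psi^*$ itself, then for $(\psi^*)^n\times C^m$, then through closed subspaces, and finally through continuous surjective images. The only set-theoretic input beyond generalities is that $\cal A$ remains MAD in $V'$; the rest is absolute or a consequence of the rigidity of compact Hausdorff topologies.

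I would first check that $\psi^*$ is compact Hausdorff in $V'$ with the topology $\tau'$ generated by $\tau_V$ as a base. The basic neighborhoods at each point are fixed elements of $V$, so Hausdorffness is automatic, and any $V'$-open cover refines to one by basic opens; a basic neighborhood of $\omega$ leaves only a set of the form $(\cup{\cal A}')\cup{\cal A}'$ with ${\cal A}'\subseteq{\cal A}$ finite, covered by finitely many further basic opens. For $\so(\psi^*)\le 2$ in $V'$, the only nontrivial case is $\omega\in\overline{B}$ for $B\subseteq\psi^*$ in $V'$. If $B$ meets infinitely many $A\in{\cal A}$, an injective enumeration of these lies in $[B]'$ and converges to $\omega$. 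Otherwise $B\cap{\cal A}$ is finite, and I would argue that MADness of $\cal A$ in $V'$ still forces $\{A\in{\cal A}:|A\cap B|=\omega\}$ to be infinite: if only finitely many $A$'s met $B$ infinitely, the remainder of $B\cap\omega$ would be almost disjoint from every element of $\cal A$, hence finite by MADness, and every $n\in\omega$ lies in some $A\in{\cal A}$ (again by MADness), so a single finite extension of the witnessing ${\cal A}'$ would cover all of $B$, contradicting $\omega\in\overline{B}$. Those infinitely many $A$'s then lie in $[B]'$ and converge to $\omega$, giving $\omega\in[[B]']'$.

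For $(\psi^*)^n\times C^m$ in $V'$, Tychonoff gives compactness and $C$ is compact metrizable absolutely. The bound $\so((\psi^*)^n\times C^m)\le 2$ is obtained by replaying, inside $V'$, the combinatorial argument behind $\so((\psi^*)^n)=2$ from~\cite{Ka}: every case reduction either manipulates finitely many $\psi^*$-coordinates or appeals to the MAD property of $\cal A$, both of which transfer to $V'$ without change. This is the main technical obstacle of the lemma.

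To finish, let $F\in V$ be a closed subspace of $(\psi^*)^n\times C^m$ and $f:F\to K$ the continuous surjection witnessing that $K$ is $\psi^*$-derived. Since $V$-opens are basic $V'$-opens, $F$ remains closed and thus compact in $V'$, and $f$ remains continuous in $V'$. A continuous map from a compact space to a Hausdorff space is closed, so $f$ is perfect in $V'$. The topology $\tau'_K$ on $K$ with base $\tau_{K,V}$ is Hausdorff (being finer than $\tau_{K,V}$) and compact (being coarser than the quotient topology $\tau_{f,V'}$, which is compact as the image of $F$); $\tau_{f,V'}$ is itself compact Hausdorff, so by the rigidity of compact Hausdorff topologies $\tau'_K=\tau_{f,V'}$. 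Since closed subspaces inherit the sequential order bound, $\so(F)\le 2$, and the cited result from~\cite{Ka} that perfect maps do not raise sequential order delivers $\so(K)\le 2$ in $V'$ along with the sequentiality of $K$.
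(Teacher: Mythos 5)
The paper gives no proof of Lemma~\ref{pstable} at all (the sentence following it refers to Lemma~\ref{sextend}), so there is nothing to match against; your argument is correct and is clearly the intended one: absoluteness of the base, the MAD property in $V'$ driving the two-step convergence at the point $\omega$, and perfect maps not raising sequential order. One comment on the step you flag as the main obstacle: you do not actually need to reopen Kannan's proof and check that it ``transfers.'' Since every basic open set of $\psi^*$ is defined directly from ${\cal A}$, a natural number, and a finite subset of ${\cal A}$ --- all absolute between $V$ and $V'$ --- the topology generated in $V'$ by the $V$-base is literally the topology of the space constructed from ${\cal A}$ by the recipe of Example~\ref{psi} carried out inside $V'$; likewise for the product base of $(\psi^*)^n\times C^m$. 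As the statement $\so((\psi^*)^n)=2$ from \cite{Ka} is a ZFC theorem about an arbitrary MAD family, it can simply be applied in $V'$ to the still-MAD family ${\cal A}$, with no inspection of its proof. Two small points in your direct argument for $\psi^*$: the claim that every $n\in\omega$ lies in some $A\in{\cal A}$ is not a consequence of MADness (only that $\omega\setminus\cup{\cal A}$ is finite is), but this is harmless since finitely many isolated points can be discarded without changing the closure at $\omega$; and your rigidity argument for the image topology needs the fiber equivalence relation of $f$ to remain closed in $V'$, which again holds because its complement is a union of $V$-open boxes. With those touch-ups the proof is complete.
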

The following lemma follows from the proof of Lemma~5
of~\cite{S3}. We show the proof that $U\in{\cal U}$ remain open only.
\begin{lemma}\label{sextend}
let $G$ be a boolean $k_\omega$ group whose topology is determined by a
countable family $\K=\set K_n:n\in\omega.$ of compact subspaces closed
under the usual operations. Let ${\cal U}$ be a centered family of open subsets
of $G$, let $H=\cap\set U:0\in U\in{\cal U}.$ and $p:G\to G/H$ be the natural quotient map. Let $C=\seq
c_n\subseteq G$, $G'\subseteq G$ be the subgroup generated by $C$
which satisfy the following properties: 
\begin{enumerate}
\item\label{s.converge} $C\ain U$ for every $U\in{\cal U}$ such that $0\in U$

\item\label{s.lifting} for any $c\in G'$ and any $K\in\K$ the intersection $(c+H)\cap
K\not=\varnothing$ if and only if $c\in K$

\item $c_n\not\in(\cup_{i<n}K_i+C_n)-(\cup_{i<n}K_i+C_n)$ where
$C_n=\set c_i:i<n.$
\end{enumerate}
Let $S=C\cup\{0\}$ with the unique topology such that $C\to0$. Let
$\K^+$ be the closure of $\K\cup\{S\}$ under the usual
operations. Then the topology determined by $\K^+$ is a Hausdorff
group topology on $G$ in which each $U\in{\cal U}$ is open.
\end{lemma}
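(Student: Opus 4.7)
The task, as the excerpt signals, is to verify that every $U \in {\cal U}$ remains open in the topology $\tau$ determined by $\K^+$; the fact that $\tau$ is a Hausdorff group topology will follow from the argument of Lemma~5 of~\cite{S3} together with Lemma~\ref{komega}. My plan is to reduce, via translation invariance of both ${\cal U}$ and $\tau$, to the case $0 \in U$, and then show that $U \cap K$ is open in $K$ for every $K \in \K^+$ by induction on the structure of $K$. Because $G$ is boolean inverses are trivial, and since $\K$ is already closed under the usual operations, every $K \in \K^+$ is a finite union of sets of the form $\sum^\ell S + K_n$ with $K_n \in \K$ and $\ell \in \omega$. The union case reduces to the individual pieces (each closed in the union with a compatible topology), so I would handle $K' = \sum^\ell S + K_n$ and induct on $\ell$; the base case $\ell = 0$ gives $K' = K_n \in \K$, for which $U \cap K_n$ is already open.

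For the step from $\ell - 1$ to $\ell$, write $K' = S + K''$ with $K'' = \sum^{\ell-1} S + K_n$. The topology on $K'$ makes the sum map $\sigma \colon S \times K'' \to K'$ a quotient, so it is enough to show $\sigma^{-1}(U)$ is open in $S \times K''$. At a point $(s_0, k_0) \in \sigma^{-1}(U)$ with $s_0 = c_m$, the singleton $\{c_m\}$ is open in $S$ and $U + c_m$ lies in ${\cal U}$ by translation invariance; the inductive hypothesis applied to $U + c_m$ then gives an open $K''$-neighborhood $(U + c_m) \cap K''$ of $k_0$, producing a product neighborhood inside $\sigma^{-1}(U)$.

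The delicate case is $s_0 = 0$, where $k_0 \in U \cap K''$ and the basic $S$-neighborhoods of $0$ are of the form $\{0\} \cup (C \setminus F)$ with $F$ finite. I would produce a single open $K''$-neighborhood $N_2$ of $k_0$ with $N_2 \subseteq U$ and $c_m + N_2 \subseteq U$ for all $m \notin F$ as follows. Using centeredness of ${\cal U}$ pick $W \in {\cal U}$ with $0 \in W \subseteq U + k_0$, then $V \in {\cal U}$ with $0 \in V$ and $V + V \subseteq W$; this ``continuous addition at $0$'' is what centeredness is designed to provide, and can be arranged by extending ${\cal U}$ via Lemma~\ref{opensat} if not already present. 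Hypothesis~(\ref{s.converge}) gives $c_m \in V$ for $m$ outside some finite $F$, and the inductive hypothesis applied to $k_0 + V \in {\cal U}$ shows $N_2 := (k_0 + V) \cap K''$ is an open $K''$-neighborhood of $k_0$ with $N_2 \subseteq k_0 + V \subseteq k_0 + W \subseteq U$, while for $m \notin F$, $c_m + N_2 \subseteq k_0 + V + V \subseteq k_0 + W \subseteq U$.

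The main obstacle will be precisely the uniformity in this last case: a pointwise use of~(\ref{s.converge}) alone would only give, for each $y$ close to $k_0$, its own cofinite set of good indices, and consolidating them into a single finite $F$ requires the continuous addition property at $0$ on $G/H$. Hypotheses~(\ref{s.lifting}) and~(3) play no direct role in this part of the argument; they enter the companion verification (from the proof of Lemma~5 of~\cite{S3}) that $\tau$ is actually a Hausdorff group topology via Lemma~\ref{komega}, by controlling how $H$ meets each $K \in \K$ and by ensuring that the iterated sums $\sum^\ell S + K_n$ carry mutually compatible compact topologies.
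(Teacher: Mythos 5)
Your route is genuinely different from the paper's. The paper never works inside the compacts of $\K^+$ directly: it observes that each $U\in{\cal U}$ is the full preimage $p^{-1}(p(U))$ of a basic open set of $G/H$, so it suffices to prove that $p$ is continuous for the topology determined by $\K^+$, and it reduces this to showing that $H$ is closed in that topology. That last point is where hypothesis~\ref{s.lifting} does all the work: if $x+s\in H$ with $x\in K$ and $s\in\sum^kS\subseteq G'$, then $x\in s+H$, so $(s+H)\cap K\not=\varnothing$ and \ref{s.lifting} forces $s\in K$; hence $H\cap(K+\sum^kS)\subseteq K+K$, i.e.\ the trace of $H$ on every new compact is already contained in its trace on an old one, which is closed. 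Your structural induction on $\sum^\ell S+K_n$, using the quotient description of the topology on sums and a uniform neighborhood at points of the form $0+k_0$, is a legitimate alternative decomposition and more self-contained where it works; but your remark that \ref{s.lifting} ``plays no direct role'' marks exactly where you part company with the paper, since in the paper's proof of this very claim \ref{s.lifting} is the key hypothesis.

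The genuine gap is in the $s_0=0$ case. You need some $V\in{\cal U}$ with $0\in V$ and $V+V\subseteq W$, and you then apply hypothesis~\ref{s.converge} to this $V$ to get $c_m\in V$ for all but finitely many $m$. The definition of a centered family only asserts that $\set p(U):U\in{\cal U}.$ is a base of a Hausdorff \emph{topology} on $G/H$, not a group topology, so such a $V$ need not exist inside ${\cal U}$. Your proposed repair, extending ${\cal U}$ via Lemma~\ref{opensat}, does not close the gap: hypothesis~\ref{s.converge} is assumed only for the family ${\cal U}$ given in the statement, and there is no reason the fixed sequence $C$ should be almost contained in the newly adjoined neighborhoods of $0$ (the extension may even shrink $H$), so the crucial step ``$c_m\in V$ for cofinitely many $m$'' fails for the new $V$. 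To make your argument work you must either strengthen the reading of ``centered'' to ``base of a Hausdorff group topology on $G/H$'' (which is what the families produced by Lemma~\ref{opensat} actually satisfy, via the $O-W\subseteq V$ clause in its proof, and is plausibly the intended meaning), or argue as the paper does through the closedness of $H$, where only \ref{s.converge} for the original family and \ref{s.lifting} are needed.
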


\begin{proof}
Since by~\ref{s.converge} $p(C)\to0$ in the topology generated by the sets $p(U)$,
$U\in{\cal U}$ it is enough to show that the map $p$ is continuous in
the topology determined by $\K^+$. Thus one needs to show that $H$ is
closed in the topology determined by $\K^+$. Every $K^+\in\K^+$ is a
subspace of some $K+\sum^kS$ where $K\in\K$. Let $T=\set
k_n+s_n:n\in\omega.\subseteq (K+\sum^kS)\cap H$ be such that $T\to g$
for some $g\in G$ in the topology determined by $\K^+$ and each
$k_n\in K$, $s_n\in\sum^kS$. Then $s_n+H\ni k_n$ and
by~\ref{s.lifting} $s_n\in K$. Thus $T\subseteq K+K$ so $g\in H$.
\end{proof}
The next lemma is a corollary of Lemma~16 and Corollary~2 in \cite{S1}.
\begin{lemma}[\cite{S1}]\label{gs}
Let $G$ be a countable sequential group such that
$1<\so(G)<\omega_1$. Let $\seq N_n$ be a family of nowhere dense
subsets of $G$. Then there exists a $C\subseteq G$ such that $C\to0$
and $C\cap N_n$ is finite for every $n\in\omega$.
\end{lemma}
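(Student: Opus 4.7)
My plan is to reduce the statement to Lemma~16 and Corollary~2 of \cite{S1}, by producing from $\so(G)>1$ a two-level fan of convergent sequences to $0$ on which the cited extraction machinery operates.

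Since $G$ is sequential with $1<\so(G)<\omega_1$, I would fix $A\subseteq G$ witnessing $\so(G)>1$: after translation we have $0\in\cl{A}\setminus[A]'$. Sequentiality and $\so(G)<\omega_1$ give $0\in[A]_\beta$ for some countable $\beta\geq 2$; by replacing $A$ with a suitable subset of $[A]_\gamma$ for $\gamma<\beta$ and iterating, one reduces to the two-level case $\beta=2$. Thus there is a sequence $\seq y_k$ in $[A]'\setminus\{0\}$ with $y_k\to 0$, and for each $k$ a sequence $B_k\subseteq A$ with $B_k\to y_k$.

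Lemma~16 of \cite{S1} then supplies the core extraction: in a countable sequential group of intermediate sequential order, the two-level fan $\{B_k\}$ admits a diagonal selection $c_k\in B_k$ whose range is a genuine convergent sequence to $0$ in $G$. The argument there uses both the group operation (to arrange selected elements inside prescribed base neighborhoods of $0$ via controlled differences) and the countability of $G$ (to enumerate the neighborhood constraints). Corollary~2 of \cite{S1} augments the extraction with the avoidance requirement needed here: for any prescribed countable family of nowhere dense sets $\seq N_n$, the selection can be made so that $|C\cap N_n|<\omega$ for every $n$. Nowhere density of each $N_n$ enters at each stage of the recursion, guaranteeing that inside the currently chosen neighborhood of $0$ there remain elements of $B_k$ dodging the finitely many $N_n$'s under consideration.

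The main obstacle---the diagonal extraction itself---is exactly what fails in a general sequential space, and it is the combination of intermediate sequential order, group structure, and countability that makes it possible here. Reproducing the delicate recursion from \cite{S1} would duplicate that paper, so I invoke Lemma~16 and Corollary~2 as stated.
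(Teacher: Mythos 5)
The paper offers no proof of this lemma at all --- it simply records that the statement is a corollary of Lemma~16 and Corollary~2 of \cite{S1} --- and your proposal rests on exactly the same citation, merely adding a plausible gloss (the reduction to a two-level fan and the diagonal selection avoiding the nowhere dense sets) of what those cited results deliver. So your approach is essentially the same as the paper's, and invoking the two results of \cite{S1} as stated is precisely what the author does.
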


\section{Countable groups in the Cohen model}
The proof of the following lemma is based on the ideas of~\cite{DB}, Theorem~3.1.
\begin{lemma}\label{ncsgisq}
Let $V\vdash\hbox{CH}$ and $\G$ be
$\Fn(\omega_2,2)$-generic over $V$. Then in $V[\G]$ there are no
countable sequential groups $G$ such that $1<\so(G)<\omega_1$.
\end{lemma}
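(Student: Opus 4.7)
The plan is to suppose, for contradiction, that a countable sequential group $G$ with $1<\so(G)<\omega_1$ exists in $V[\G]$, and then to produce a countable family of nowhere dense subsets of $G$ that is incompatible with Lemma~\ref{gs}. In outline: I want a countable family $\set N_n:n\in\omega.$ of nowhere dense subsets of $G$ such that every sequence $C\subseteq G$ with $C\to 0$ must meet some $N_n$ in an infinite set, which directly contradicts Lemma~\ref{gs}.

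First, I would fix the underlying set of $G$ to be $\omega$ and choose a nice name $\dot G$ for its group and topological structure. The $\omega_1$-c.c.\ of $\Fn(\omega_2,2)$, combined with CH in $V$, implies that each individual open set and each individual convergent sequence of $G$ is coded by a name with support in some countable subset of $\omega_2$. Using $\so(G)>1$ I would pick a witness $B\subseteq G$ with $0\in\cl B\setminus[B]'$, and using $\so(G)<\omega_1$ I would consider the iterated sequential closures $[B]_\beta$ for $\beta<\so(G)$ that climb from $B$ up to $\cl B$. A countable $A_0\subseteq\omega_2$ would then be chosen to support names for $B$, for a countable base of neighbourhoods of $0$, and for a countable ordinal skeleton of the approximation $\seq{[B]}_\beta$.

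Second, viewing $V[\G]$ as a Cohen extension of $V[\G\cap\Fn(A_0,2)]$, I would work inside this intermediate model and, following the ideas of~\cite{DB}, Theorem~3.1, together with the MAD family from Lemma~\ref{kunen} (which remains MAD in $V[\G]$), construct the family $\set N_n:n\in\omega.$. The rough picture is that the failure of $0\in[B]'$ at intermediate levels forces any candidate sequence $C\to 0$ to thread through all levels $[B]_\beta$; the maximality of the MAD family in the extension then allows one to thin each level into a nowhere dense piece so that no convergent sequence can almost-disjointly avoid the whole family. Applying Lemma~\ref{gs} to $\set N_n:n\in\omega.$ would then yield a sequence $C\to 0$ almost disjoint from each $N_n$, contradicting the construction and hence the existence of $G$.

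The main obstacle will be the second step, namely arranging the $N_n$'s so that every sequence $C\to 0$ in $V[\G]$ is caught by some $N_n$ infinitely often. Lemma~\ref{gs} guarantees that any \emph{fixed} countable nowhere-dense family can be avoided, so the construction must exploit features particular to the Cohen extension over a model of CH, rather than general properties of countable sequential groups: specifically, the preservation of the MAD family of Lemma~\ref{kunen}, and the product decomposition $\Fn(\omega_2,2)=\Fn(A_0,2)\times\Fn(\omega_2\setminus A_0,2)$ which absorbs the countable name $\dot G$ into a small support and leaves enough generic coordinates to match each potential convergent sequence against a level of the $[B]_\beta$ scaffolding. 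This is precisely where the analogue of~\cite{DB}, Theorem~3.1 should do the technical work.
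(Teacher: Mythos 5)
Your outline concentrates the entire difficulty into one step that you leave unspecified, and that step is precisely the one that cannot work as stated: a countable family $\set N_n:n\in\omega.$ of nowhere dense sets meeting every sequence $C\to 0$ infinitely often is exactly what Lemma~\ref{gs} rules out, in ZFC, for \emph{any} countable sequential group of intermediate sequential order. So the contradiction cannot be manufactured by building such a ``trap'' family inside $G$ itself; it has to come from somewhere else. The ingredients you hint at do not supply a mechanism: the MAD family of Lemma~\ref{kunen} plays no role whatsoever in this lemma (in the paper it is used only in Section~4, to keep the compact space $\psi^*$ sequential after adding Cohen reals), and the scaffolding $[B]_\beta$ of iterated sequential closures is not what the argument turns on.

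The paper's proof is organized quite differently. It reflects the name $\namef\tau$ into an $\omega$-closed elementary submodel $M$ of size $\omega_1$ and shows that the interpreted topology $\tau_M\in V[\G\cap M]$ is again a sequential group topology with $1<\so(\tau_M)<\omega_1$, and that every $\tau_M$-convergent sequence remains convergent in $\tau$. Lemma~\ref{gs} is then applied \emph{inside} $V[\G\cap M]$, not to produce an outright contradiction but to the countable family of approximations $U_n$ of a $\tau$-open set $U$ (indexed by the conditions $p_n$ in the countable residual support $L$): if all relevant $U_n$ were nowhere dense in $\tau_M$, one would get $S\to0$ in $\tau_M$ (hence in $\tau$) with each $S\cap U_n$ finite, while genericity forces $S\setminus k\subseteq U_n$ for some $p_n\in\G$. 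Hence $\overline{U}$ contains a nonempty $\tau_M$-open set, so $\tau$ has a $\pi$-base of size $\omega_1$ and, being a group topology, weight at most $\omega_1$. The contradiction is then delivered by Nyikos's theorem: a non-Fr\'echet sequential group contains a copy of $S(\omega)$, whose character in $V[\G]$ is $\omega_2$. None of these three components --- the reflection to $\tau_M$, the $\pi$-base estimate via Lemma~\ref{gs}, and the $S(\omega)$ endgame --- appears in your proposal, and without them it does not close.
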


\begin{proof}
Let $\namef\tau$ be an $\Fn(\omega_2,2)$-name such that
$$
q\force_{\Fn(\omega_2,2)}``\namef\tau\hbox{ is a sequential group
topology on }\omega\hbox{ such that }1<\so(\namef\tau)<\omega_1''
$$
for some $q\in\G$ and let $M$ be an $\omega$-closed (i.e.\
$[M]^\omega\subseteq M$) elementary submodel of $\Hc$ of size
$\omega_1$ such that $q,\namef\tau\in M$. We will treat $\namef\tau$ as
an $\omega_2$ long listing of countable names for subsets of $\omega$
so that $\namef\tau(\alpha)$ will refer to the name of the element of
$\namef\tau$ with index $\alpha<\omega_2$. We will also assume that
countable names of all the relevant countable structures (such as the
group operation) needed later are in $M$.

Let $P=\Fn(\omega_2\cap M,2)$, and let $\tau_M$ consist of
$\G\cap M$-interpretations of $\namef\tau(\alpha)$ such that $\alpha\in
M$. Then $\tau_M\in V[\G\cap M]$ and standard arguments show that
$\tau_M$ is a base of a group topology on $\omega$ (note that since
the name of the group operation on $\omega$ is countable and in $M$
the operation itself is in $V[\G\cap M]$ and that $\omega$-closedness
of $M$ implies that $\tau_M$ is a topology). Let $A\in V[\G\cap M]$ be a
subset of $\omega$ such that $A$ is not closed in $\tau_M$. Let
$\namef A$ be a $P$-name of $A$. Using $\omega$-closedness of
$M$ we may assume that $\namef A\in M$. Then there exists an
$m\in\omega$ such that $M\vdash q'\force``\forall \alpha<\omega_2:\namef\tau(\alpha)\cap\namef
A\not=\varnothing\hbox{ if }m\in\namef\tau(\alpha)''$ for some $q'\leq
q$, $q'\in\G$. Then
$$
q'\force_P``\namef S\subseteq\namef A\hbox{ and }\namef
S\to \namef n\in\omega\setminus\namef A''
$$
for some $\namef S,\namef n\in M$. Thus the $\G\cap
M$-interpretation $S$ of $\namef S$ is in $V[\G\cap M]$ and is a
convergent sequence in $\tau_M$ such that $S\subseteq A$ and $S\to
n\in\omega\setminus A$. Therefore $\tau_M$ is sequential.

For every $\alpha<\omega_1$ the statement $\so(\tau_M)>\alpha$ is
witnessed by some countable set $S\in V[\G\cap M]$ with a countable
name $\namef S\in M$. Using
$q\force_{\Fn(\omega_2,2)}``1<\so(\namef\tau)<\omega_1''$ and the
elementarity of $M$ one can conclude that $1<\so(\tau_M)<\omega_1$ in
$V[\G\cap M]$.

Let $U\in V[\G]=V[\G\cap M][\G\setminus M]$ be a subset of $\omega$ open
in $\tau$.
Using standard ccc arguments, pick an $\Fn(L,2)$-name
$\namef U\in V[\G\cap M]$ for $U$ where $L\subseteq\omega_2\setminus M$ is countable. Let $\seq
p_n$ list all the elements of $\Fn(L,2)$ and put $U_n=\set
i\in\omega:p_n\force_{\Fn(\omega_2\setminus M,2)}``i\in\namef U''.\in V[\G\cap M]$.

If $\namef S$ is a countable $P$-name of a
sequence $S\subseteq\omega$ that converges to some $m\in\omega$ in $\tau_M$ and such that
$S\in V[\G\cap M]$ then one may assume that $\namef S\in M$ by
$\omega$-closedness.  Pick a $q'''\leq
q$, $q'''\in\G$ such that $q'''\force_{\Fn(\omega_2,2)}``\namef
S\ain\namef\tau(\alpha)\hbox{ for any }\alpha\in M\hbox{ such that
}m\in\namef\tau(\alpha)''$ and put $q''=q'''\cap M$. If $S\not\to m$ in
$\tau$ there exists a $q'\leq q''$ such that $q'\force_{\Fn(\omega_2,2)}``\namef 
S\not\ain\namef\tau(\alpha)\hbox{ and }m\in\namef\tau(\alpha)\hbox{ for
some }\alpha<\omega_2''$. By elementarity one can assume that
$q'\in M$. Then $q'''$ and $q'$ are compatible, a contradiction.

Hence every sequence that converges in $\tau_M$ is
also convergent in $\tau$ and the (sequential) closure of $U_n$ in $\tau_M$ is
contained in the (sequential) closure of $U_n$ in $\tau$.

Suppose $r'\in\G$, $r'\force_{\Fn(\omega_2,2)}``0\in\namef U''$,
$r=r'\cap \Fn(L,2)$, and each $U_n$ such that $p_n\leq r$ is nowhere dense in $\tau_M$. Then
by Lemma~\ref{gs} and
$1<\so(\tau_M)<\omega_1$ there exists a
sequence $S\in V[\G\cap M]$ such that $S\to0$ in $\tau_M$ and the set
$S\cap U_n$ is finite for every $n\in\omega$ such that $p_n\leq r$. By
extending $r'$ if necessary we may assume that
$r'\force_{\Fn(\omega_2,2)}``\namef S\to0\hbox{ in }\namef\tau''$ for
some name $\namef S\in M$ of $S$.

Since $U$ is open in $\tau$ there exist a $k\in\omega$ and a
$p\in\Fn(\omega_2\setminus M,2)$ such that $p\leq
r'\cap(\omega_2\setminus M)$ and $p\force_{\Fn(\omega_2\setminus
M,2)}``S\setminus k\subseteq\namef U''$. Let $p_n=p\cap\Fn(L,2)$. Then
$p_n\leq r$ and $S\setminus k\subseteq U_n$. Otherwise there would exist an $m\in
S\setminus k$ and a $p'\leq p$ such that
$p'\force_{\Fn(\omega_2\setminus M,2)}``m\not\in \namef U''$ contrary
to the choice of $p$. The property of $p_n$ thus contradicts the
choice of $U_n$ and $S$.

Thus for every $r'\in\G$ the set $\overline{U_n}$ has a nonempty interior in $\tau_M$ for some
$n\in\omega$ such that $p_n\leq r'\cap\Fn(L,2)$. So $p_n\in\G$,
$\overline{U_n}$ has a nonempty interior in $\tau_M$, and $U_n\subseteq U$ for some
$n\in\omega$. Therefore, for every open $U$ in $\tau$ its closure
$\overline{U}$ contains a $V\in\tau_M$. This implies that $\tau$ has a
$\pi$-base of size $\omega_1$. Since $\tau$ is a group topology its
weight is at most $\omega_1$. If $\tau$ is not Fr\'echet, the group contains
a subspace homeomorphic to $S(\omega)$ (see \cite{Ny}). But $S(\omega)$
has a character of $\omega_2$ in $V[\G]$, a contradiction.
\end{proof}

\section{An uncountable group}
We assume below that $V\vdash\diamondsuit$.

Let $B\subseteq2^{\omega_1}$ be a subspace of $2^{\omega_1}$
homeomorphic to the space $\psi^*$ from Example~\ref{psi}. View
$2^{\omega_1}$ as a boolean group and arrange for the only point of
$B$ of Cantor-Bendixon rank 2 to be $0\in2^{\omega_1}$. Let $G$ be the
group algebraically generated by $B$. Let
$\K_0=\set\sum^nB:n\in\omega.$ and $\tau_0$ be the topology determined
by $\K_0$. Then $\tau_0$ is a co-countable Hausdorff group topology by
Lemma~\ref{komega}. Let ${\cal U}_0=\{G\}$.

Let $F:G\times\Fn(\omega_1,2)\to\omega_1$ be a one to one
map. If $\namef A$ is a name of a subset of $G$ one can think of
$\namef A$ as a subset of $G\times\Fn(\omega_1,2)$, or using
$F(\namef A)$ instead, as a subset of $\omega_1$. Conversely, every
subset of $\omega_1$ can be naturally interpreted as a name of a
subset of $G$. Let $\set C_\alpha:\alpha\in\omega_1.$ be a
$\diamondsuit$-sequence where each $C_\alpha$ is interpreted as an
$\Fn(L_\alpha,2)$-name as above for some countable
$L_\alpha\subseteq\omega_1$. We will write $\pi_1(C_\alpha)=\set g\in
G:(g,p)\in F^{-1}(C_\alpha)\hbox{ for some }p\in\Fn(L_\alpha,2).$. Let
$G=\set g_\alpha:\alpha<\omega_1\hbox{ is limit}.$

Construct decreasing co-countable topologies $\tau_\alpha$, increasing countable
families of compact subspaces $\Ka$, and increasing countable families
$\Ua$ of subsets of $G$ so that

\begin{enumerate}[series=conditions]
\item\label{subCa} if $\alpha$ is a limit ordinal and $C_\alpha$
has the following properties:
\begin{enumerate}
\item\label{subCa.dense} $\force_{\Fn(\omega_1,2)}``C_\alpha\cap
U\not=\varnothing\hbox{ for every }U\in\Ua\hbox{ such that }0\in U''$

\item\label{subCa.free} for every $K\in\Ka$ there is a $\namef U_K$
such that $\force_{\Fn(\omega_1,2)}``0\in\namef U_K\in\Ua\hbox{ and }C_\alpha\cap (K\cap
\namef U_K)=\varnothing''$

\item\label{subCa.indep} the group $G_\alpha$ generated by
$\pi_1(C_\alpha)$ is such that $G_\alpha\cap H_\alpha=\{0\}$ and for
every $g\in G_\alpha$ and $K\in\Ka$
$(g+H_\alpha)\cap K\not=\varnothing$ if and only if $g\in K$
\end{enumerate}
then $S_{\alpha+1}\to0$ in $\tau_{\alpha+1}$ and
$\force_{\Fn(\omega_1,2)}``S_{\alpha+1}\cap C_\alpha\hbox{ is infinite}''$.

\item\label{G.sep} if $\alpha$ is a limit ordinal there exists a
$U\in{\cal U}_{\alpha+1}$ such that $g_\alpha\not\in U$ 

\item\label{G.ind} each $U\in\Ua$ is open in $\tau_\alpha$ where
$\tau_\alpha$ is a $k_\omega$ Hausdorff group topology determined by $\Ka$

\item\label{G.collpse} $\Ua$ is a centered family and
$H_\alpha=\cap\set U\in\Ua:0\in U.$ is a closed subroup of $G$ in $\tau_\alpha$

\item\label{G.derived} each $K\in\Ka$ is an \MMd\ compact, $\Ka$ is
closed under finite sums, unions, and intersections, and covers $G$
\end{enumerate}

Suppose $\alpha$ is a limit ordinal. Let $\Ua=\cup_{\beta<\alpha}\Ub$ and
$\Ka=\cup_{\beta<\alpha}\Kb=\seq K_n$. Let $0\not=g_\alpha\in G$. The
topology $\tau_\alpha$ determined by $\Ka$ is a Hausdorff group
topology on $G$ in which every $U\in\Ua$ is open
by~\ref{G.ind}, \ref{G.derived}, and Lemma~\ref{komega}.
Then~\ref{G.ind}--\ref{G.derived} follow from the construction.

Find a $U'\subseteq G$ such that $0\in
U'\not\ni g_\alpha$ and $U$ is open in $\tau_\alpha$. Extend the
family $\Ua$ if necessary to a countable centered family ${\cal U}_{\alpha+1}$ of
open in $\tau_\alpha$ subsets of $G$ such that~\ref{G.collpse} holds
and $0\in U\subseteq U'$ for some $U\in{\cal U}_{\alpha+1}$ using
Lemma~\ref{opensat}. Put $\set U:0\in U\in{\cal U}_{\alpha+1}.=\seq
U_n$.

Let $\seq p_n$ list all the elements of $\Fn(L_\alpha,2)$ such that
each is listed infinitely many times. Pick $s_n$ inductively for each
$n\in\omega$ so that
\begin{enumerate}[conditions]
\item\label{Cafree} $s_n\not\in(\cup_{i<n}K_i+S_n)-(\cup_{i<n}K_i+S_n)$ where
$S_n=\set c_i:i<n.$

\item\label{Caopen} $s_n\in\cap_{i=0}^n U_i$

\item\label{Cadense} there exists a $p_n'\leq p_n$ such that $p_n'\force_{\Fn(\omega_1,2)}``s_n\in
C_\alpha''$.

\end{enumerate}
The sum in \ref{Cafree} is a subset of some $K\in\Ka$ so
by~\ref{subCa.free} one can find a $p\leq p_n$ and a $U_m$ such
that $p\force_{\Fn(\omega_1,2)}``C_\alpha\cap(K\cap
U_m)=\varnothing''$. Applying~\ref{subCa.dense} find a $p_n'\leq p$
and an $s_n\in\cap_{i=0}^{\max\{n,m\}}U_i$ such that
$p_n'\force_{\Fn(\omega_1,2)}``s_n\in C_\alpha''$. By the choice of
$p$ $s_n\not\in K$. Then \ref{Cafree}--\ref{Cadense} follow.

Put $S_{\alpha+1}=\set s_n:n\in\omega.$ and 
let $\Ka'=\Ka\cup\{S_{\alpha+1}\cup\{0\}\}$. Let $\K_{\alpha+1}$ be the
closure of $\Ka'$ under finite sums, unions and intersections and $\tau_{\alpha+1}$ be the
$k_\omega$ group topology generated by
$\K_{\alpha+1}$. Lemma~\ref{sextend}, \ref{Caopen}, \ref{subCa.indep},
and~\ref{Cafree} imply that $\tau_{\alpha+1}$ is Hausdorff and each
$U\in{\cal U}_{\alpha+1}$ is open in
$\tau_{\alpha+1}$. Thus~\ref{G.ind}--\ref{G.derived} hold for $\alpha+1$.

Suppose there exist an $m\in\omega$ and a $p\in\Fn(\omega_1,2)$ such
that $p\force``s_n\not\in C_\alpha\hbox{ for }n\geq m''$. We may
assume that $p\in\Fn(L_\alpha,2)$ so $p=p_k$ for some
$k\in\omega$ such that $k>m$. Then by~\ref{Cadense} there exists a $p_k'\leq p_k$ such
that $p_k'\force``s_k\in C_\alpha''$, contradiction. Thus
$\force_{\Fn(\omega_1,2)}``S_{\alpha+1}\cap C_\alpha\hbox{ is
infinite}''$. So~\ref{subCa} holds.

If $\alpha$ is a successor the case of $\alpha=\beta+1$ for a limit
$\beta$ is discussed above. In other cases, trivial choices suffice.

Put $\K=\cup_{\alpha<\omega_1}\Ka$ and ${\cal
U}=\cup_{\alpha<\omega_1}\Ua$. In $V[\G]$ let the topology on each
$K\in\K$ be the topology whose base of neighborhoods consists of the
subsets open in the natural topology of $K$ in $V$. Let $\tau$ be the
topology on $G$ determined by $\K$. Then $\tau$ is translation
invariant by Lemma~\ref{komega} and sequential by Lemma~\ref{pstable}. Each
$U\in{\cal U}$ is open in $\tau$ by~\ref{G.ind}. The intersection
$\cap\set U\in{\cal U}:0\in U.=\{0\}$ by~\ref{G.sep} so ${\cal U}$ is
a translation invariant base of open set of a Hausdorff group topology
$\tau'$ on $G$ by~\ref{G.ind} and~\cite{AT}, Theorem~1.3.12. The
topology $\tau'$ is coarser than $\tau$ so sets $U\cap K$ form a base
of open sets for every $K\in\K$ where $U\in{\cal U}$.

The next lemma shows that $\tau$ and $\tau'$ coinside in $V[\G]$.
\begin{lemma}\label{groupsq}
There is no $\Fn(\omega_2,2)$-name $\namef A$ of a subset of $G$ such that
\begin{enumerate}[conditions]
\item\label{A2.dense'} $p\force_{\Fn(\omega_2,2)}``\namef A\cap U\not=\varnothing\hbox{
for every }U\in{\cal U}\hbox{ such that }0\in U''$

\item\label{A2.free'} $p\force_{\Fn(\omega_2,2)}``\forall
K\in\K\exists V:0\in V\subseteq K\hbox{ is relatively
open in $K$ and }\namef A\cap V=\varnothing\hbox{ if }0\in
K''$
\end{enumerate}
for some $p\in\Fn(\omega_2,2)$.
\end{lemma}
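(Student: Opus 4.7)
The plan is to assume for contradiction that some pair $(p,\namef A)$ witnesses the failure of the lemma, and then locate a stage $\alpha<\omega_1$ of the construction at which the diamond sequence has already predicted $\namef A$ and $S_{\alpha+1}$ has been chosen to destroy it.

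The first step is to reduce $\namef A$ to an $\Fn(\omega_1,2)$-name. Using the ccc of $\Fn(\omega_2,2)$ and the fact that $|G|=\omega_1$, every name for a subset of $G$ is realizable as an $\Fn(I,2)$-name for some $I\in[\omega_2]^{\omega_1}$, and by a natural permutation of coordinates I may assume $I\subseteq\omega_1$, so that the coded set $F(\namef A)$ sits inside $\omega_1$. By $\diamondsuit$ the set of $\alpha<\omega_1$ with $C_\alpha=F(\namef A)\cap\alpha$ is stationary, and intersecting this with a suitable club I may further require that $L_\alpha$ contains the support of $p$ together with the supports of the (countable) names of the members of $\Ua$ and $\Ka$ that are relevant to the forcing statements \ref{A2.dense'} and \ref{A2.free'} at this stage.

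At such a reflecting $\alpha$ the two clauses of the lemma restricted below $p$ transfer to the corresponding clauses \ref{subCa.dense} and \ref{subCa.free} about $C_\alpha$. The algebraic independence clause \ref{subCa.indep} I would handle by arranging the coding $F$ and the initial enumeration of $G$ so that the support $\pi_1(C_\alpha)$ sits in sufficiently general position with respect to $H_\alpha$ and the countable family $\Ka$; alternatively, one can replace $\namef A$ by a translate that absorbs any accidental overlap with $H_\alpha$, using that $G$ is co-countable and the tower $\{H_\beta\}$ has countable quotients. Granted \ref{subCa}, the construction then produces $S_{\alpha+1}=\{s_n:n\in\omega\}$ with $S_{\alpha+1}\to0$ in $\tau_{\alpha+1}$, hence in $\tau$, and $p\force_{\Fn(\omega_1,2)}``S_{\alpha+1}\cap C_\alpha\hbox{ is infinite}''$. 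Since $p\in\G$ and $C_\alpha$ is identified with the relevant restriction of $\namef A$, the interpretation $A$ in $V[\G]$ satisfies $|A\cap S_{\alpha+1}|=\omega$.

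The contradiction now comes from \ref{A2.free'} applied to the compact $K=S_{\alpha+1}\cup\{0\}\in\K$: since $0\in K$, there would be a relative neighborhood $V$ of $0$ in $K$ with $A\cap V=\varnothing$, but the topology that was put on $K$ in the construction makes $s_n\to0$, so $V$ contains all but finitely many $s_n$, contradicting $|A\cap S_{\alpha+1}|=\omega$. The principal obstacle is the second step, specifically the verification of the independence clause \ref{subCa.indep} at the reflecting stage; this forces one to set up the bookkeeping (the map $F$ and the enumeration of $G$) carefully so that the natural reflection of $\namef A$ does not accidentally lie inside the growing tower of subgroups $H_\beta$, and it is where I expect the argument to require the most care.
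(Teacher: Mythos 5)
Your overall skeleton matches the paper's: reduce $\namef A$ to an $\Fn(\omega_1,2)$-name, use $\diamondsuit$ to find $\alpha$ with $C_\alpha=F(\namef A)\cap\alpha$ at which the hypotheses of \ref{subCa} hold, conclude that $S_{\alpha+1}\cap\namef A$ is forced infinite, and contradict \ref{A2.free'} at the compact $K=S_{\alpha+1}\cup\{0\}$. But the step you yourself flag as the principal obstacle --- verifying the independence clause \ref{subCa.indep} at the reflecting stage --- is a genuine gap, and the two fixes you sketch do not work. You cannot ``arrange the coding $F$ and the initial enumeration of $G$ so that $\pi_1(C_\alpha)$ sits in general position'': the coding and the construction are fixed in advance, while $C_\alpha=F(\namef A)\cap\alpha$ is determined by the adversary's name $\namef A$, so there is nothing left to arrange once $\namef A$ is handed to you. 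Replacing $\namef A$ by a translate also fails: conditions \ref{A2.dense'} and \ref{A2.free'} are anchored at $0$ (they quantify over $U\in{\cal U}$ with $0\in U$ and over $K\in\K$ with $0\in K$), and in any case the lifting requirement $(g+H_\alpha)\cap K\not=\varnothing\iff g\in K$ is not repaired by translating $A$. The paper's resolution is different in kind: it defines the set $\Omega_A$ of $\alpha$ satisfying the reflected conditions \ref{Aa.dense}--\ref{Aa.closed}, shows it is a club by taking $\alpha=M\cap\omega_1$ for a countable elementary submodel $M$ containing $\namef A,{\cal U},\K,\tau,\tau'$, and then derives \ref{Aa.indep} \emph{automatically} from elementarity: every $g\in G_\alpha$ lies in $M$, and since $\tau'$ is Hausdorff (this is exactly what clause \ref{G.sep} of the construction buys) one finds separating sets $U,U'\in{\cal U}\cap M$, which therefore lie in some ${\cal U}_\delta$ with $\delta<\alpha$, giving both $G_\alpha\cap H_\alpha=\{0\}$ and $(g+H_\alpha)\cap K=\varnothing$ when $g\notin K$. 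Only then does $\diamondsuit$ on the club $\Omega_A$ produce the desired $\alpha$.

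A secondary gap: clauses \ref{subCa.dense} and \ref{subCa.free} of the construction require the statements to be forced by the \emph{trivial} condition, whereas you only carry them ``restricted below $p$.'' The paper spends the first half of the proof eliminating $p$, by passing to the name $\namef A'$ obtained by restricting all conditions in $\namef A$ to $\omega_2\setminus\supp p$ and running a compatibility argument; merely demanding that $L_\alpha$ contain $\supp p$ does not achieve this. Likewise, the downward transfer of the density clause to $A_\alpha$ (that $A_\alpha$, not just $\namef A$, is forced to meet every $U\in\Ua$) needs the compatibility argument with $p'=p\cap M$ that the paper gives; it is not automatic.
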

\begin{proof}
Using standard abuse of terminology and possibly changing $\namef A$
if necessary treat $\namef A$ as a set of pairs
$(r,g)$ where $r\leq p$, $r\in\Fn(\omega_2,2)$, and $g\in G$.
Let $Z=\supp p$ and let $\namef A'$ be the name constructed by
replacing every pair $(q,g)\in \namef A$ by $(q|_{\omega_2\setminus
Z},g)$. Suppose there exists a $p'\in\Fn(\omega_2\setminus Z,2)$ and a $K\in\K$
such that
$$
p'\force_{\Fn(\omega_2\setminus Z,2)}``\forall V:\namef A'\cap
V\not=\varnothing\hbox{ if }0\in V\hbox{ and }V\hbox{ is
relatively open in }K''
$$
Using~\ref{A2.free'} and the property of $\tau'$ stated in the remark
before the lemma find a $U_K\in{\cal U}$ such that $0\in U_K$, and a $q'\in\Fn(\omega_2,2)$ such that
$q'|_{\omega_2\setminus Z}\leq p'$, $q'\leq
p$, and
$
q'\force_{\Fn(\omega_2,2)}``\namef A\cap (U_K\cap K)=\varnothing''
$.
Now
$
q'|_{\omega_2\setminus Z}\force_{\Fn(\omega_2\setminus Z,2)}``\namef A'\cap(U_K\cap K)=\varnothing''
$.
Otherwise there are $q''\in\Fn(\omega_2\setminus Z,2)$ and $g\in G$
such that $q''\leq q'|_{\omega_2\setminus Z}$ and
$
q''\force_{\Fn(\omega_2\setminus Z,2)}``g\in\namef A'\cap(U_K\cap K)''
$.
It follows from the construction of $\namef A'$ that
$
q''\force_{\Fn(\omega_2,2)}``g\in\namef A\cap(U_K\cap K)''
$
contradicting the choice of $q'$.

Now $q'|_{\omega_2\setminus Z}$ and $p'$ are compatible, a
contradiction. Therefore for every $K\in\K$
$$
\force_{\Fn(\omega_2\setminus Z,2)}``\exists V\subseteq K\hbox{
relatively open in $K$}:\namef A'\cap V=\varnothing\hbox{ and }0\in
V\hbox{ if }0\in K''
$$
Mapping $\omega_2\setminus Z$ onto $\omega_2$ in a one to one manner,
and changing $\namef A'$ appropriately, after using standard
arguments~\ref{A2.free'} can be replaced with
\begin{enumerate}[conditions]
\item\label{A2.free} for every $K\in\K$ there exists a $\namef U_K$ such that
$\force_{\Fn(\omega_2,2)}``0\in\namef U_K\in{\cal U}\hbox{ and }\namef A\cap(\namef U_K\cap K)=\varnothing''$
\end{enumerate}
A similar argument shows that $p$ can be omitted in~\ref{A2.dense'}.
Since $G$, ${\cal U}$, and $\K$ are of size $\omega_1$ standard arguments imply that $\namef
A$ and $\namef U_K$ may be chosen to be $\Fn(L,2)$-names where
$|L|\leq\omega_1$. Thus there exists a one to one map of $L$ onto a
subset of $\omega_1$. Using this map to construct an automorphism of
$\Fn(\omega_2,2)$ and replacing $\namef A$ with the appropriate
`image' one can assume that $\namef A$ is an $\Fn(\omega_1,2)$-name
and
\begin{enumerate}[conditions]
\item\label{A.dense} $\force_{\Fn(\omega_1,2)}``\namef A\cap U\not=\varnothing\hbox{
for every }U\in{\cal U}\hbox{ such that }0\in U''$

\item\label{A.free} for every $K\in\K$ there exists an $\Fn(\omega_1,2)$-name
$\namef U_K$ such that $\force_{\Fn(\omega_1,2)}``0\in\namef
U_K\in{\cal U}\hbox{ and }\namef A\cap(\namef
U_K\cap K)=\varnothing''$

\end{enumerate}

Let $A_\alpha=F(\namef A)\cap\alpha$. Let $\Omega_A$ be the set of all
$\alpha\in\omega_1$ such that
\begin{enumerate}[conditions]
\item\label{Aa.dense} $\force_{\Fn(\omega_1,2)}``A_\alpha\cap
U\not=\varnothing\hbox{ for every }U\in\Ua\hbox{ such that }0\in U''$

\item\label{Aa.free} for every $K\in\Ka$ there is a $\namef U_K$
such that $\force_{\Fn(\omega_1,2)}``0\in\namef U_K\in\Ua\hbox{ and }A\cap (K\cap
\namef U_K)=\varnothing''$

\item\label{Aa.indep} the group $G_\alpha$ generated by
$\pi_1(A_\alpha)$ is such that $G_\alpha\cap H_\alpha=\{0\}$ and for
every $g\in G_\alpha$ and any $K\in\Ka$ $(g+H_\alpha)\cap
K\not=\varnothing$ if and only if $g\in K$

\item\label{Aa.closed} $A_\alpha$ is an $\Fn(\alpha,2)$-name
\end{enumerate}

Note that $\Omega_A$ is a closed subset of $\omega_1$. To show that
$\Omega_A$ is unbounded let
$\beta\in\omega_1$ and let $M$ be a countable elementary submodel such
that $\{\namef A, \beta, {\cal U}, \K, \tau, \tau', \Fn(\omega_1,2)\}\subseteq M$ along with the
necessary details of the construction above. Put
$\alpha=M\cap\omega_1$. If $U\in\Ua$ then $U\in{\cal U}_{\alpha'}$ for some
$\alpha'<\alpha$ so by elementarity, $U\in M$. If
$\not\force_{\Fn(\omega_1,2)}``A_\alpha\cap U\not=\varnothing''$ there
exists a $p\in\Fn(\omega_1,2)$ such that
$p\force_{\Fn(\omega_1,2)}``A_\alpha\cap U=\varnothing''$. Let
$p'=p\cap M$. Using~\ref{A.dense} and the elementarity of $M$ find a
$p''\in M$ and a $g\in G\cap M$ such that $p''\leq p'$ and
$p''\force_{\Fn(\omega_1,2)}``g\in\namef A\cap U''$. Since $g\in M$
this implies $p''\force_{\Fn(\omega_1,2)}``g\in A_\alpha\cap U''$
which contradicts the compatibility of $p''$ and
$p$. Thus~\ref{Aa.dense} holds.

Property~\ref{Aa.free} can be shown by first observing that each
$K\in\Ka$ is in $M$ and using~\ref{A.free}.

To show~\ref{Aa.indep} let $g\in G_\alpha$ and $K\in\Ka$ be such that
$g\not\in K$. Then $K\in M$ and $g\in M$ as a finite
sum of elements of $G\cap M$. Let $g\in K'$ for some
$K'\in\Ka$. Since $\tau'$ is a Hausdorff group topology in which $K'$
is a compact subset there exists a $U\in{\cal U}_\delta\subseteq{\cal
U}$ for some $\delta<\omega_1$ such that $0\in U$
and $g\not\in U$. By elementarity, we can assume
that $K',U\in M$ so $\delta<\alpha$ and $U\in\Ua$. Similarly, there
exists a $U'\in{\cal U}$ such that $0\in U'$ and $(g+U')\cap K=\varnothing$ and
$U'\in M$ so $U'\in\Ua$. Thus $H_\alpha\subseteq U'$.

The elementarity of $M$ implies~\ref{Aa.closed}, as $A_{\alpha'}\in M$
is countable for every $\alpha'<\alpha$.

Since $\Omega_A$ is a club there exists an $\alpha\in\omega_1$ such
that $A_\alpha=C_\alpha$. Now~\ref{Aa.dense}--\ref{Aa.indep}
imply \ref{subCa.dense}--\ref{subCa.indep} so~\ref{subCa} implies that
$\force_{\Fn(\omega_1,2)}``\namef A\cap S_{\alpha+1}\hbox{ is infinite
}''$ contradicting~\ref{A.free}.
\end{proof}

A name $\namef A$ with the properties of Lemma~\ref{groupsq} would exist if
$\tau$ and $\tau'$ were different in $V[\G]$.
Thus $\tau$ is a sequential group topology on $G$. The sequential
order $\so(G)\geq2$ since $B\subseteq 
G$ is a compact subset of $G$ such that $\so(B)=2$.
If $\so(G)>2$ in $V[\G]$ there exists a set $A\subseteq G$ such that
$0\not\in[A]_2$ while $0\in\overline{A}$. Since $\so(K)\leq2$ for
every $K\in\K$ by the remark after Example~\ref{psi}, $0\not\in\overline{A\cap K}$. An
$\Fn(\omega_2,2)$-name $\namef A$ of $A$ satisfies~\ref{A2.dense'}
and~\ref{A2.free'} in $V$.

The argument above thus shows
\begin{lemma}\label{agoisqo}
Let $V\vdash\hbox{CH}$ and $\G$ be $\Fn(\omega_2,2)$-generic. There
exists a sequential group $G\in V[\G]$ such that $\so(G)=2$.
\end{lemma}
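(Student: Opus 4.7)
The plan is to check three things in $V[\G]$: that $\tau$ agrees with $\tau'$, that $(G,\tau)$ is sequential, and that $\so(G)=2$. The construction preceding Lemma~\ref{groupsq} already supplies $\tau'$ as a Hausdorff group topology coarser than $\tau$; equality is the decisive first step. I would argue by contradiction: if $U$ is $\tau$-open with $0\in U$ but no $V\in{\cal U}$ containing $0$ lies inside $U$, then $A=G\setminus U$ meets every such $V$ (since each $V\in{\cal U}$ is $\tau$-open and $V\not\subseteq U$), giving~\ref{A2.dense'}; and for each $K\in\K$ with $0\in K$ the relatively open set $V=U\cap K$ contains $0$ and is disjoint from $A$, giving~\ref{A2.free'}. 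An $\Fn(\omega_2,2)$-name $\namef A\in V$ of $A$, together with a condition in $\G$ forcing both properties, then contradicts Lemma~\ref{groupsq}. Consequently $\tau=\tau'$, and so $\tau$ is a Hausdorff group topology on $G$.

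For sequentiality I would apply Lemma~\ref{pstable} to each $K\in\K$: being \MMd, and with the MAD family of Lemma~\ref{kunen} still MAD in $V[\G]$, each such $K$ remains a compact sequential space of sequential order at most $2$. Because $\tau$ is determined by $\K$ and $\K$ covers $G$, any non-$\tau$-closed $A\subseteq G$ is non-closed inside some $K\in\K$, so a sequence in $A$ converges in $K$, and therefore in $\tau$, to a point outside $A$. The lower bound $\so(G)\geq 2$ is then automatic: $B\in\K_0\subseteq\K$ is a compact subspace of $G$ retaining its original topology, and $\so(B)=2$ in $V[\G]$ by Example~\ref{psi} together with the preservation of convergence afforded by Lemma~\ref{pstable}.

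For the upper bound $\so(G)\leq 2$ I would appeal once more to Lemma~\ref{groupsq}. Supposing for contradiction that $\so(G)>2$, choose $A\subseteq G$ in $V[\G]$ with $0\in\overline{A}$ but $0\notin[A]_2$. For every $K\in\K$ with $0\in K$ one has $\overline{A\cap K}\subseteq[A\cap K]_2\subseteq[A]_2$ using $\so(K)\leq 2$, so $0\notin\overline{A\cap K}$ and a relatively open $V\subseteq K$ with $0\in V$ disjoint from $A$ exists, yielding~\ref{A2.free'}. Condition~\ref{A2.dense'} follows from $0\in\overline{A}$ in $\tau$ together with the $\tau$-openness of every $V\in{\cal U}$. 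An $\Fn(\omega_2,2)$-name in $V$ of $A$ then contradicts Lemma~\ref{groupsq} and completes the argument.

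The substantive work sits inside Lemma~\ref{groupsq} itself, whose diagonalization against potential bad names using the $\diamondsuit$-sequence and the transfinite construction satisfying~\ref{subCa}--\ref{G.derived} is the technical heart; granted that lemma and the remarks preceding it, the present statement is essentially a matter of correctly extracting the witnessing sets from hypothetical counterexamples to $\tau=\tau'$ and to $\so(G)\leq 2$.
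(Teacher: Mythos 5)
Your proposal is correct and follows essentially the same route as the paper: the paper likewise derives $\tau=\tau'$ and $\so(G)\leq2$ by extracting a name violating Lemma~\ref{groupsq} from a hypothetical counterexample, gets sequentiality from Lemma~\ref{pstable} applied to the determining family $\K$, and gets $\so(G)\geq2$ from the copy of $\psi^*$ inside $G$. Your write-up merely makes explicit the choices of witnessing sets ($A=G\setminus U$ and the set with $0\in\overline{A}\setminus[A]_2$) that the paper leaves to the reader.
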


Combining Lemma~\ref{ncsgisq} an Lemma~\ref{agoisqo} we obtain the main
theorem of this paper which answers the author's question in~\cite{S1}.

\begin{theorem}\label{sggap}
It is consistent with the axioms of ZFC that there are
no countable sequential groups of intermediate sequential order while
there is an uncountable such group.
\end{theorem}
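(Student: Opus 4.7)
The plan is to combine Lemma~\ref{ncsgisq} and Lemma~\ref{agoisqo} inside a single forcing extension. I would start with a ground model $V\vdash\diamondsuit$ (for instance $V=L$); since $\diamondsuit$ implies CH, the hypothesis $V\vdash\hbox{CH}$ required by both lemmas -- as well as the standing assumption $V\vdash\diamondsuit$ used throughout Section~4 for the construction behind Lemma~\ref{agoisqo} -- is automatically in place. I would then let $\G$ be $\Fn(\omega_2,2)$-generic over $V$ and work inside $V[\G]$.

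For the first half of the conclusion, Lemma~\ref{ncsgisq} directly asserts that $V[\G]$ contains no countable sequential group $G$ with $1<\so(G)<\omega_1$, which is precisely the condition defining intermediate sequential order. For the second half, Lemma~\ref{agoisqo} furnishes a sequential group $G\in V[\G]$ with $\so(G)=2$; since $1<2<\omega_1$, this $G$ has intermediate sequential order, and it is uncountable by construction, the $\omega_1$-long inductive construction of Section~4 algebraically generating $G$ from the copy of $\psi^*\subseteq 2^{\omega_1}$ so that $|G|=\omega_1$.

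There is no genuine obstacle at this stage: both halves of the theorem have been packaged as stand-alone lemmas, and the only compatibility check is the elementary implication that $\diamondsuit$ entails CH, so that the two ground-model hypotheses can be enforced simultaneously. The forcing extension $V[\G]$ is therefore the desired ZFC model witnessing the stated consistency result.
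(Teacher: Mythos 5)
Your proposal is correct and is essentially the paper's own argument: the theorem is obtained by combining Lemma~\ref{ncsgisq} and Lemma~\ref{agoisqo} over a ground model of $\diamondsuit$ (hence of CH), with the single extension by $\Fn(\omega_2,2)$ witnessing both halves. Your added remarks about $\diamondsuit\Rightarrow$ CH and the uncountability of the group from Section~4 are the right (and only) compatibility checks.
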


It is natural to ask if sequential groups of arbitrary sequential
order $\alpha<\omega_1$ exist in $V[\G]$. According
to \cite{Dp}, the compact spaces in the well known series constructed
by A.~Bashkirov (see~\cite{B}) can be made Cohen-indestructible. The
group algebraically generated by such a compact space is co-countable
since every quotient of countable pseudocharacter will be covered by
countably many scattered metrizable compact subspaces. Thus the
construction in this section proceeds virtually unchanged to result in
a sequential group of any sequential order $\alpha<\omega_1$.

On the other hand, the group constructed in Lemma~\ref{agoisqo} has
the property that $\overline{A}=\cup\set\overline{A\cap K}:K\in\K.$
for any $A\subseteq G$ thus making $\tau$ ``Fr\'echet $\pmod\K$'' (see
\cite{Ka} for a general discussion of ordinal invariants defined in
this manner). So the following question seems natural.

\begin{question}
Does there exist a sequential group $G\in V[\G]$ of sequential order $2$
such that for any family $\K$ of compact subsets of $G$ there exists a
subset $A\subseteq G$ such that
$\overline{A}\not=\cup\set\overline{A\cap K}:K\in\K.$?
\end{question}

Additionally, it would be interesting to know if the intermediate
sequential order ``reflects'' to a group of size $\omega_1$ in
$V[\G]$.

\begin{question}
Let $G\in V[\G]$ be a sequential group of intermediate sequential
order. Does there exists a sequential group $G'\subseteq G$ of
intermediate sequential order of size $\omega_1$?
\end{question}

\end{document}